%%% revised, kian, 27 June 2018

\documentclass[12pt, reqno]{amsart}

\usepackage{amsmath, amsthm, amscd, amsfonts, amssymb, graphicx, color}
\usepackage[bookmarksnumbered, colorlinks, plainpages]{hyperref}
\input{mathrsfs.sty}
\hypersetup{colorlinks=true,linkcolor=red, anchorcolor=green, citecolor=cyan, urlcolor=red, filecolor=magenta, pdftoolbar=true}

\textheight 22.5truecm \textwidth 15truecm
\setlength{\oddsidemargin}{0.05in}\setlength{\evensidemargin}{0.05in}

\setlength{\topmargin}{-.5cm}

\newtheorem{theorem}{Theorem}[section]
\newtheorem{lemma}[theorem]{Lemma}

\newtheorem{corollary}[theorem]{Corollary}
\theoremstyle{definition}

\theoremstyle{remark}
\newtheorem{remark}[theorem]{Remark}
\numberwithin{equation}{section}

\newcommand{\uinorm}[1]{{\left\vert\kern-0.25ex\left\vert\kern-0.25ex\left\vert #1
 \right\vert\kern-0.25ex\right\vert\kern-0.25ex\right\vert}}

\begin{document}

\title[Ando--Hiai inequality]{Power means of probability measures and Ando--Hiai inequality}
\author[ M. Kian]{ Mohsen Kian$^1$  \MakeLowercase{and} Mohammad Sal Moslehian}
\address{$^1$ Department of Mathematics, Faculty of Basic Sciences, University of Bojnord, P. O. Box
1339, Bojnord 94531, Iran}
\email{kian@ub.ac.ir }
\address{$^2$ Department of Pure Mathematics, Center of Excellence in Analysis on Algebraic
Structures (CEAAS), Ferdowsi University of Mashhad, P. O. Box 1159, Mashhad 91775,
Iran.}
\email{moslehian@um.ac.ir}

\subjclass[2010]{Primary 47A63; Secondary 47A64.}
\keywords{Matrix mean, probability measure, positive matrix.}

\begin{abstract}
Let $\mu$ be a probability measure of compact support on the set $\mathbb{P}_n$ of all positive definite matrices, let $t\in(0,1]$, and let $P_t(\mu)$ be the unique positive solution of $X=\int_{\mathbb{P}_n}X\sharp_t Z d\mu(Z)$. In this paper, we  show that
 $$ P_t(\mu)\leq I\quad \Longrightarrow\quad P_{\frac{t}{p}}(\nu)\leq P_t(\mu)$$
for every $p\geq1$, where $\nu(Z)=\mu(Z^{1/p})$. This provides an extension of the Ando--Hiai inequality for matrix power means. Moreover, we prove that if $\Phi:\mathbb{M}_n\to\mathbb{M}_m$ is a unital positive linear map, then $\Phi(P_t(\mu))\leq P_t(\nu)$ for all $t\in[-1,1]\backslash\{0\}$, where $\nu$ is a certain measure.
\end{abstract}
\maketitle

\section{Introduction and preliminaries}
Let $\mathbb{M}_n$ be the algebra of all $n\times n$ complex matrices and let $I$ be the identity matrix. We denote by $\mathbb{P}_n$ the set of all positive definite matrices in $\mathbb{M}_n$.  A real-valued continuous function $f$ defined on an interval $J\subseteq \mathbb{R}$ is said to be \emph{matrix convex} (\emph{matrix monotone}, resp.) if for all Hermitian matrices $A$ and $B$ with spectra in $J$, $f(\lambda A+(1-\lambda) B)\leq \lambda f(A)+(1-\lambda)f(B)$ for all $\lambda \in [0,1]$ ($A\leq B$ implies that $f(A)\leq f(B)$, resp.). Here $f(A)$ is defined by the continuous functional calculus.

The \emph{weighted geometric mean} of two positive definite matrices $A$ and $B$ is defined by $A\sharp_t B=B^{1/2}\left(B^{-1/2}AB^{-1/2}\right)^tB^{1/2}$, where $t\in[0,1]$. There have been some works devoted to introducing the matrix geometric mean of several variables; see \cite{ALM,bh,Ha}. Another multivariate matrix mean is the matrix power mean. A \emph{family of matrix power means} is introduced in \cite{LP} for every $k$-tuple of positive definite matrices $\mathbb{A}=(A_1,\ldots,A_k)$ and every $t\in(0,1]$ as the unique positive invertible solution of the matrix equation
\begin{align}\label{pme}
X=\sum_{i=1}^k \omega_i (X\ \sharp_{t}\ A_i),
\end{align}
which is denoted by $P_t(\omega;\mathbb{A})$. This family is defined also for $t\in [-1,0)$ by $P_t(\omega;\mathbb{A}) := P_{-t}(\omega;\mathbb{A}^{-1})^{-1}$, where $\mathbb{A}^{-1}=(A_1^{-1},\ldots,A_k^{-1})$.

A continuous form of matrix power means is studied in \cite{K-L} as follows: If $\mu$ is a probability measure of  compact support on $\mathbb{P}_n$ and $t\in{(0,1]}$, then
\begin{align}\label{eq}
X=\int_{\mathbb{P}_n}X\sharp_t Z d\mu(Z)
\end{align}
has a unique solution $P_t(\mu)$ in $\mathbb{P}_n$. This unique solution is called the \emph{power mean of $\mu$}. It defines a map $P_t$ from the set of all probability measures of compact support on $\mathbb{P}_n$ into $\mathbb{P}_n$. In the case when $t\in[-1,0)$, the power mean is defined by $P_t(\mu)=P_{-t}(\nu)^{-1}$, where $\nu(\mathcal{E})=\mu( \mathcal{E}^{-1})$ for every measurable set $\mathcal{E}$.
The integral above is in the sense of vector valued. If $f$ is a continuous function from a topological space $\mathcal{X}$ into a Banach space and $\mu$ is a probability measure of compact support on the Borel $\sigma$-algebra of $\mathcal{X}$, then the Bochner integral $\int_{\mathcal{X}} f d\mu$ is defined by the limit  $\lim_{m\to\infty}\sum_{i=1}^{N_m}f(a_{m,i})\mu(\mathcal{B}_{m,i})$ of the Riemannian sums in which $\{\mathcal{B}_{m,i}:\ i=1,\ldots,N_m\}$ is a partition of ${\rm supp}(\mu)$ and $a_{m,i}$ is an arbitrary point of $\mathcal{B}_{m,i}$ for each $1\leq i\leq N_m$.
Another approach to multivariate matrix means can be found in \cite{HSW}.

The well-known \emph{Ando--Hiai inequality} asserts that if $A\sharp_t B\leq I$ for two positive definite matrices $A$ and $B$, then $A^p\sharp_tB^p\leq I$ holds for every $p\geq1$. This interesting inequality has been investigated by several mathematician; see \cite{HSW, FIKM, LY, Wa}. Fujii and Kamei \cite{FK} proved that the Ando--Hiai inequality is equivalent to the Furuta inequality. Yamazaki \cite{YAM} extended it for the Riemannian mean of $n$ positive definite matrice. In addition, Seo \cite{SEO} presented a complementary to it. In this paper, we present an Ando--Hiai inequality for power means of probability measures. In particular, we show that if $\|P_t(\mu)\|\leq 1$, then $\|P_{\frac{t}{p}}(\nu)\|\leq 1$ for every $p\geq1$, where $\nu(Z)=\mu(Z^{1/p})$. Some known results are derived from our main result as special cases. Moreover, we prove that if $\Phi:\mathbb{M}_n\to\mathbb{M}_m$ is a unital positive linear map, then $\Phi(P_t(\mu))\leq P_t(\nu)$ for all $t\in[-1,1]\backslash\{0\}$, where $\nu$ is a certain measure.

%%%%%%%%%%%%%%%%%%%%%%%%%%%%%%%%%%%%%%%%%%%%%%%%%%%%%%%%%%%%%%%%%%%%%%%%%%%%%%%%%%%%%%%%%%

\section{Main results}
We start our work with some lemmas which are needed to prove the main result.
 \begin{lemma}\label{llm1}
 Let $\mu$ be a probability measure of compact support on $\mathbb{P}_n$. Assume that $P_t(\mu)=X$. If $X\in {\rm supp}(\mu)$, then $P_t(\nu)=X$, where $\nu$ is defined on $ {\rm supp}(\mu)\backslash\{X\}$ by $\nu(\mathcal{E})=\frac{\mu(\mathcal{E}\backslash \{X\})}{1-\mu(\{X\})}$ for every Borel set $\mathcal{E}\subseteq\mathbb{P}_n$.
 \end{lemma}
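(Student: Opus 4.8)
The plan is to read off the equation that characterizes $P_t(\mu)$, peel off the atomic part of the Bochner integral sitting at $X$ (using the elementary identity $X\sharp_t X=X$), renormalize, and observe that what remains is exactly the equation characterizing $P_t(\nu)$; uniqueness of the positive solution of \eqref{eq}, established in \cite{K-L}, then forces $P_t(\nu)=X$.

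First I would dispose of the degenerate cases. Write $\alpha:=\mu(\{X\})$. If $\alpha=1$ then $\mu=\delta_X$, the hypothesis $P_t(\mu)=X$ is automatic, and $\nu$ is not defined, so there is nothing to prove; if $\alpha=0$ then $\nu=\mu$ and the assertion is trivial. Hence assume $\alpha\in(0,1)$. Next I would verify that $\nu$, given by $\nu(\mathcal{E})=\frac{\mu(\mathcal{E}\setminus\{X\})}{1-\alpha}$, lies in the class of measures for which $P_t$ is defined: it is a nonnegative Borel measure with $\nu(\mathbb{P}_n)=\frac{\mu(\mathbb{P}_n\setminus\{X\})}{1-\alpha}=1$, and ${\rm supp}(\nu)$, being a closed subset of the compact set ${\rm supp}(\mu)\subseteq\mathbb{P}_n$, is itself compact and contained in $\mathbb{P}_n$. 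So $P_t(\nu)$ is well defined via \eqref{eq}.

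Now I would start from the equation for $X=P_t(\mu)$,
\begin{equation*}
X=\int_{\mathbb{P}_n}X\sharp_t Z\,d\mu(Z),
\end{equation*}
and split the integral over the disjoint Borel sets $\{X\}$ and $\mathbb{P}_n\setminus\{X\}$. Since $Z\mapsto X\sharp_t Z$ is continuous (hence bounded on the compact ${\rm supp}(\mu)$), the Bochner integral in \eqref{eq} is countably additive over disjoint Borel sets, and the contribution of the atom is $\int_{\{X\}}X\sharp_t Z\,d\mu(Z)=\alpha\,(X\sharp_t X)=\alpha X$ because $X\sharp_t X=X^{1/2}(X^{-1/2}XX^{-1/2})^tX^{1/2}=X$. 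Therefore $(1-\alpha)X=\int_{\mathbb{P}_n\setminus\{X\}}X\sharp_t Z\,d\mu(Z)$, and dividing by $1-\alpha$ gives, by the definition of $\nu$,
\begin{equation*}
X=\int_{\mathbb{P}_n\setminus\{X\}}X\sharp_t Z\,d\nu(Z)=\int_{\mathbb{P}_n}X\sharp_t Z\,d\nu(Z).
\end{equation*}
Thus $X\in\mathbb{P}_n$ solves the equation defining $P_t(\nu)$, so by uniqueness of its positive solution we get $P_t(\nu)=X$.

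The argument is short and has no genuine obstacle; the only point requiring care is the measure-theoretic bookkeeping — namely that the vector-valued integral in \eqref{eq}, although defined through Riemann sums, restricts to and decomposes over Borel subsets exactly as an ordinary Lebesgue integral, so that the atom at $X$ can be isolated cleanly — together with the routine check that $\nu$ remains a compactly supported probability measure on $\mathbb{P}_n$. Everything else reduces to the fixed-point identity $X\sharp_t X=X$ and the uniqueness theorem of \cite{K-L}.
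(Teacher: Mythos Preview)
Your proof is correct and follows essentially the same route as the paper: isolate the atom at $X$, use the identity $X\sharp_t X=X$, and invoke uniqueness of the solution of \eqref{eq}. The only cosmetic difference is direction---you show that $X=P_t(\mu)$ solves the $\nu$-equation, whereas the paper shows that $Y=P_t(\nu)$ solves the $\mu$-equation---but the content is identical.
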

 \begin{proof}
 Assume that $P_t(\mu)=X$ and $X\in {\rm supp}(\mu)$. Then $X=\int_{\mathbb{P}_n} X\sharp_t Zd \mu(Z)$. In addition, assume that $Y=P_t(\nu)$. We show that $Y=X$. It follows from the definition of power mean that $Y$ satisfies the equation $Y=\int_{\mathbb{P}_n} Y\sharp_t Zd \nu(Z)$. Then
 \begin{align*}
 Y=\lim_{n\to\infty} \sum_{i=1}^{N_n} Y\sharp_t Z_{n,i} \nu(\mathcal{B}_{n,i}),
 \end{align*}
 where $\{\mathcal{B}_{n,i}:\ i=1,\ldots, N_n\}$ is a partition of ${\rm supp}(\nu)$. We have
 \begin{align*}
 Y=\int_{\mathbb{P}_n} Y\sharp_t Zd \nu(Z) = \int_{{\rm supp}(\nu)} Y\sharp_t Z\frac{d \mu(Z)}{1-\mu(\{X\})}.
 \end{align*}
 It follows that
 \begin{align*}
 Y- \mu(\{X\})Y=\int_{{\rm supp}(\nu)} Y\sharp_t Z d \mu(Z)
 \end{align*}
 and so
 \begin{align*}
 Y&=\int_{{\rm supp}(\nu)} Y\sharp_t Z d \mu(Z)+ \mu(\{X\})Y=\int_{{\rm supp}(\nu)} Y\sharp_t Z d \mu(Z)+ \mu(\{X\})Y\sharp_t Y\\
 &=\lim_{n\to\infty} \sum_{i=1}^{N_n} Y\sharp_t Z_{n,i} \mu(\mathcal{B}_{n,i})+Y\sharp_t Y\mu(\{X\}).
 \end{align*}
 Since $X=P_t(\mu)$, we conclude that $X$ satisfies the above equation and so $X=Y$.
 \end{proof}

%%%%%%%%%%%%%%%%%%%%%%%%%%%%%%%%%%%%%%%

The proof of the next lemma is similar to that of Lemma \ref{llm1}, hence we omit it.

 \begin{lemma}\label{lm2}
 Let $\mu$ be a probability measure of compact support on $\mathbb{P}_n$. If $B\in {\rm supp}(\mu)$ and $P_t(\mu)\leq B$, then $P_t(\nu)\leq B$, where $\nu$ is defined on $ {\rm supp}(\mu)\backslash\{B\}$ by $\nu(\mathcal{E})=\frac{\mu(\mathcal{E}\backslash\{B\})}{1-\mu(\{B\})}$ for every Borel set $\mathcal{E} \subseteq \mathbb{P}_n$.
 \end{lemma}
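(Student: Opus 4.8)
The plan is to run the same atom-splitting computation as in the proof of Lemma~\ref{llm1}, but to replace the appeal to uniqueness of the solution of \eqref{eq} by its order-theoretic counterpart. Recall that for any probability measure $\rho$ of compact support on $\mathbb{P}_n$ and any $t\in(0,1]$ the map $\mathcal{F}_\rho(W):=\int_{\mathbb{P}_n}W\sharp_t Z\,d\rho(Z)$ is order preserving --- immediate from monotonicity of $\sharp_t$ in its first variable together with monotonicity of the Bochner integral --- and, by the construction of $P_t(\rho)$ in \cite{K-L} (cf.\ \cite{LP} for finitely supported measures), the iterates $\mathcal{F}_\rho^{\,n}(W)$ converge in the Thompson metric to the fixed point $P_t(\rho)$ from an arbitrary starting point $W\in\mathbb{P}_n$. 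Consequently $\mathcal{F}_\rho(W)\leq W$ forces $\mathcal{F}_\rho^{\,n+1}(W)\leq\mathcal{F}_\rho^{\,n}(W)\leq\cdots\leq W$ for every $n$, and letting $n\to\infty$ (the Loewner order being closed) gives $P_t(\rho)\leq W$. This ``supersolution principle'' will play here the role that uniqueness played in Lemma~\ref{llm1}.

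Unlike in Lemma~\ref{llm1}, I would work not with $P_t(\nu)$ but with $X:=P_t(\mu)$. Set $s:=\mu(\{B\})$; one may assume $0<s<1$: if $s=1$ then $\mu=\delta_B$, $P_t(\mu)=B$ and there is nothing to prove, and if $s=0$ then $\mu$ and $\nu$ agree off a $\mu$-null set and the conclusion is trivial. Splitting off the atom at $B$ in \eqref{eq} gives
\begin{align*}
X=\int_{{\rm supp}(\mu)\backslash\{B\}}X\sharp_t Z\,d\mu(Z)+s\,(X\sharp_t B).
\end{align*}
Since $X\leq B$ and $\sharp_t$ is monotone in its second variable, $X\sharp_t B\geq X\sharp_t X=X$, whence
\begin{align*}
(1-s)\,X\geq\int_{{\rm supp}(\mu)\backslash\{B\}}X\sharp_t Z\,d\mu(Z).
\end{align*}
Dividing by $1-s>0$ and recalling $\nu(\mathcal{E})=\mu(\mathcal{E}\backslash\{B\})/(1-s)$, this says exactly $X\geq\int_{\mathbb{P}_n}X\sharp_t Z\,d\nu(Z)=\mathcal{F}_\nu(X)$.

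The supersolution principle, applied with $\rho=\nu$ and $W=X$, now yields $P_t(\nu)\leq X=P_t(\mu)$, and in particular $P_t(\nu)\leq B$, which is the claim. The routine part is the Riemann-sum bookkeeping behind the atom-splitting identity, identical to that in Lemma~\ref{llm1}; the one point that really needs care --- and the main obstacle --- is to make the supersolution principle rigorous in the present continuous, Bochner-integral framework, that is, to verify that $\mathcal{F}_\nu$ is order preserving and that the iteration $\mathcal{F}_\nu^{\,n}(X)$ converges (monotonically, once $\mathcal{F}_\nu(X)\leq X$) to $P_t(\nu)$, so that the Loewner inequality $\mathcal{F}_\nu^{\,n}(X)\leq X$ persists in the limit.
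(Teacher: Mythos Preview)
Your argument is correct. The paper omits the proof of this lemma, saying only that it is ``similar to that of Lemma~\ref{llm1}'', so there is no detailed proof to compare against; your write-up is in fact a faithful adaptation of the Lemma~\ref{llm1} computation, with the one necessary change you identify---replacing the appeal to uniqueness by the supersolution/iteration principle. That principle is not foreign to the paper: it is invoked explicitly later, in the proof of Lemma~\ref{lm1}, where the authors write ``By the monotonicity of $f$, we obtain $f^k(X)\geq X$ for every positive integer $k$, and so $P_t(\mu)\geq X$'', and implicitly in Lemma~\ref{th1}. So your ``main obstacle'' is handled exactly at the level of rigor the paper itself adopts.

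One small remark: your choice to start from $X=P_t(\mu)$ rather than from $Y=P_t(\nu)$ is the right one and is essential. Starting from $Y$ as in Lemma~\ref{llm1} would require $Y\sharp_t B\geq Y$, i.e.\ $Y\leq B$, which is precisely the conclusion; your route avoids this circularity and in fact yields the slightly sharper statement $P_t(\nu)\leq P_t(\mu)$.
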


%%%%%%%%%%%%%%%%%%%%%%%%%%%%%%%%%%%%%%%%%%%%%%%%%%%%%%%%%%%%%
\begin{lemma}\label{th1}
 Let $\mu$ be a probability measure of compact support on $\mathbb{P}_n$. If $t\in(0,1]$ and $\left(\int_{\mathbb{P}_n} Z^t d \mu(Z)\right)^\frac{1}{t}\leq I$, then $P_t(\mu)\leq I$.
 If $t\in[-1,0)$ and $\left(\int_{\mathbb{P}_n} Z^t d \mu(Z)\right)^\frac{1}{t}\geq I$, then $P_t(\mu)\geq I$.
\end{lemma}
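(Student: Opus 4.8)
The plan is to use the fixed-point description of $P_t(\mu)$ together with a monotone iteration. For $t\in(0,1]$ put
\[
F(X)=\int_{\mathbb{P}_n}X\sharp_t Z\,d\mu(Z),\qquad X\in\mathbb{P}_n,
\]
so that, by definition, $P_t(\mu)$ is the unique fixed point of $F$ in $\mathbb{P}_n$; moreover, by the contraction property of $F$ in the Thompson metric established in \cite{LP,K-L}, the iterates of $F$ starting from any point of $\mathbb{P}_n$ converge to $P_t(\mu)$. Two elementary facts about $F$ are needed. First, $F$ is monotone: $X\le Y$ forces $X\sharp_t Z\le Y\sharp_t Z$ for each $Z$ by monotonicity of the weighted geometric mean in the first slot, and the Bochner integral preserves the positive semidefinite order, so $F(X)\le F(Y)$. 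Second, $I\sharp_t Z=Z^t$, whence $F(I)=\int_{\mathbb{P}_n}Z^t\,d\mu(Z)$.

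Assume $t\in(0,1]$ and $\big(\int_{\mathbb{P}_n}Z^t\,d\mu\big)^{1/t}\le I$. Since for $t\in(0,1]$ one has $A^{1/t}\le I\iff A\le I$ for positive $A$, the hypothesis is just $\int_{\mathbb{P}_n}Z^t\,d\mu\le I$, i.e.\ $F(I)\le I$ by the second fact above. Now I would iterate: set $X_0=I$ and $X_{m+1}=F(X_m)$. From $X_1=F(I)\le I=X_0$ and the monotonicity of $F$, induction gives $X_{m+1}=F(X_m)\le F(X_{m-1})=X_m$, so $(X_m)$ is decreasing and $X_m\le I$ for all $m$. Letting $m\to\infty$ and using $X_m\to P_t(\mu)$ gives $P_t(\mu)\le I$. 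For $t\in[-1,0)$ I would reduce to the case just proved: by definition $P_t(\mu)=P_{-t}(\nu)^{-1}$ with $\nu$ the pushforward of $\mu$ under $Z\mapsto Z^{-1}$, and a change of variables gives $\int_{\mathbb{P}_n}W^{-t}\,d\nu(W)=\int_{\mathbb{P}_n}Z^t\,d\mu(Z)$. Because $1/t<0$, the hypothesis $\big(\int Z^t\,d\mu\big)^{1/t}\ge I$ again reads $\int Z^t\,d\mu\le I$, hence $\big(\int W^{-t}\,d\nu\big)^{1/(-t)}\le I$; the first part applied to $\nu$ with the parameter $-t\in(0,1]$ yields $P_{-t}(\nu)\le I$, and inverting (which reverses the order on $\mathbb{P}_n$) gives $P_t(\mu)=P_{-t}(\nu)^{-1}\ge I$.

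The argument is mostly assembling standard facts, so the one point that genuinely needs to be invoked is the convergence of the $F$-iterates to $P_t(\mu)$, i.e.\ that $F$ is a strict contraction for the Thompson metric; this is exactly how the power mean is constructed in \cite{LP,K-L}, so I expect no real obstacle here, only the need to cite it carefully and to observe that the decreasing sequence $(X_m)$ therefore has its limit in $\mathbb{P}_n$. The remaining ingredients---monotonicity of $\sharp_t$, order-preservation of the Bochner integral, the identity $F(I)=\int Z^t\,d\mu$, and the scalar equivalences $A^{1/t}\le I\iff A\le I$ for $t\in(0,1]$ and $A^{1/t}\ge I\iff A\le I$ for $t<0$---are routine.
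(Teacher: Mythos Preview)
Your argument is correct. The key observation $F(I)=\int Z^t\,d\mu\le I$, the monotonicity of $F$, and the convergence $F^m(X)\to P_t(\mu)$ (which follows from the strict Thompson-metric contraction of $F$ for $t\in(0,1)$, and trivially for $t=1$ since $F$ is then constant) combine exactly as you say; the negative-$t$ reduction also matches the paper's.

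Your route is, however, genuinely different from---and considerably shorter than---the proof the paper gives for this lemma. The paper proceeds by an auxiliary construction: it finds an expansive $X$ so that $\tfrac12\int Z^t\,d\mu+\tfrac12 X^t=I$, builds a measure $\nu$ supported on $\mathrm{supp}(\mu)\cup\{X\}$ with $P_t(\nu)=I$, compares $\nu$ with a second measure $\lambda$ obtained by replacing the atom at $X$ by an atom at $I$, shows $P_t(\lambda)\le P_t(\nu)=I$ via a monotone-iteration comparison of the two integral maps, and finally strips off the $I$-atom using Lemma~\ref{lm2} to conclude $P_t(\mu)\le I$. What your approach buys is economy: you bypass Lemmas~\ref{llm1}--\ref{lm2} entirely and avoid constructing any auxiliary measures. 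What the paper's approach buys is perhaps a more explicit witness for the equality case and a demonstration that the auxiliary lemmas are usable. It is worth noting that the paper itself later proves Lemma~\ref{lm1} by exactly your monotone-iteration idea and then remarks that Lemma~\ref{lm1} re-derives the present statement; so your proof is essentially the argument the authors themselves use on a second pass.
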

\begin{proof}
Assume that $\left(\int_{\mathbb{P}_n} Z^t d \mu(Z)\right)^\frac{1}{t}\leq I$, which is equivalent to $\int_{\mathbb{P}_n} Z^t d \mu(Z)\leq I$. Then
$$\lim_{n\to \infty}\sum_{i=1}^{N_n}Z_{n,i}^t \mu(\mathcal{B}_{n,i}) \leq I,$$
where $\{\mathcal{B}_{n,i}:\ i=1,\ldots, N_n\}$ is a partition of ${\rm supp}(\mu)$.
It follows that there exists an expansive positive matrix $X$,  in the sense that $\|X\|\geq 1$, such that
\begin{align}\label{msm}
\lim_{n\to \infty}\sum_{i=1}^{N_n}Z_{n,i}^t\frac{\mu(\mathcal{B}_{n,i})}{2}+ \frac{1}{2}X^t= I.
\end{align}
Assume that $\nu$ is the measure on ${\rm supp}(\mu)\cup \{X\}$ defined by $\nu(\mathcal{E})=\mu(\mathcal{E})/2$ for every $\mathcal{E}\subseteq {\rm supp}(\mu)$ and $\nu (\{X\})=1/2$. Then $\{\mathcal{C}_{n,i}\}=\{\mathcal{B}_{n,i}:\ i=1,\ldots, N_n\}\cup\{X\} $ is a partition for ${\rm supp}(\nu)$, and we have
\begin{align*}
\int_{\mathbb{P}_n}I\sharp_t Z d \nu (Z)&= \lim_{n\to \infty}\sum_{i=1}^{N_n}I\sharp_t Z_{n,i} \nu(\mathcal{C}_{n,i})\\
&=\lim_{n\to \infty}\sum_{i=1}^{N_n}I\sharp_t Z_{n,i} \nu(\mathcal{B}_{n,i})+I\sharp_tX \nu(\{X\})\\
& =
\lim_{n\to \infty}\sum_{i=1}^{N_n}Z_{n,i}^t\nu(\mathcal{B}_{n,i})+ X^t \nu(\{X\})= I \qquad\qquad  \qquad\qquad  \qquad\qquad \qquad ({\rm by~} \eqref{msm}).
\end{align*}
The uniqueness of the solution of \eqref{eq} implies that $P_t(\nu)=I$. Now assume that $\lambda$ is defined on ${\rm supp}(\mu)\cup \{I\}$ by $\lambda(\mathcal{E})=\nu(\mathcal{E})$ for every $\mathcal{E}\subseteq {\rm supp}(\mu)$, and $\lambda(\{I\})=\frac{1}{2}$. Then $\{\mathcal{D}_{n,i}\}=\{\mathcal{B}_{n,i}:\ i=1,\dots, N_n\}\cup\{I\} $ is a partition for $\mathrm{supp(\lambda)}$. Note that
\begin{align*}
 \int_{\mathbb{P}_n} Y\sharp_t Z d\nu(Z)&=\lim_{n\to\infty} \sum_{i=1}^{N_n}Y\sharp_t Z_{n,i} \nu(\mathcal{C}_{n,i})\\
 &=\lim_{n\to\infty} \sum_{i=1}^{N_n}Y\sharp_t Z_{n,i} \mu(\mathcal{B}_{n,i})+\frac{1}{2}Y\sharp_t X\\
 &\geq \lim_{n\to\infty} \sum_{i=1}^{N_n}Y\sharp_t Z_{n,i} \mu(\mathcal{B}_{n,i})+\frac{1}{2}Y\sharp_t I\qquad(\mbox{by $X\geq I$})\\
 &=\lim_{n\to\infty} \sum_{i=1}^{N_n}Y\sharp_t Z_{n,i} \lambda(\mathcal{D}_{n,i})=\int_{\mathbb{P}_n} Y\sharp_t Z d \lambda(Z).
\end{align*}
 Assume that $f(Y)=\int_{\mathbb{P}_n} Y\sharp_t Z d\nu(Z)$ and $g(Y)=\int_{\mathbb{P}_n} Y\sharp_t Z d \lambda(Z)$. Then $f$ and $g$ are monotone functions and have unique fixed points $P_t(\nu)$ and $P_t(\lambda)$, respectively. Moreover, $f(Y)\geq g(Y)$ by the above equation. This implies that $f^k(Y)\geq g^k(Y)$ for every positive integer $k$ and so $P_t(\nu)\geq P_t(\lambda)$, that is, $P_t(\lambda)\leq I$. It follows from Lemma \ref{lm2} that $P_t(\mu)\leq I$.

Now assume that $t\in[-1,0)$ and $\left(\int_{\mathbb{P}_n } Z^t d \mu(Z)\right)^\frac{1}{t}\geq I$. Therefore, $\int_{\mathbb{P}_n} Z^t d \mu(Z)\leq I$. If the measure $\nu$ is defined by $d \nu(Z)=d\mu (Z^{-1})$, then
$$\int_{\mathbb{P}_n} Z^{-t} d \nu(Z)=\int_{\mathbb{P}_n} Z^{-t} d \mu(Z^{-1})=\int_{\mathbb{P}_n} W^{t} d \mu(W)\leq I.$$
We conclude from the first part of theorem that $P_{-t}(\nu)\leq I$. Hence
$$P_t(\mu)=P_{-t}(\nu)^{-1}\geq I.$$
\end{proof}
%%%%%%%%%%%%%%%%%%%%%%%%%%%%%%%%%%%%%%%%%%%%%%%%%%%%%%%%%%%%%%%%%%%%%%%%%%%%%%%%
The next theorem gives the Ando--Hiai inequality for power means of probability measures. Recall that the \emph{matrix Jensen inequality} states that if $f$ is a matrix convex function on an interval $J$ and $\Phi$ is a unital positive linear map, then $f(\Phi(A))\leq \Phi(f(A))$ for all Hermitian matrices $A$ with spectrum in $J$. In particular, $f(C^*AC)\leq C^* f(A)C$ for every $C\in\mathbb{M}_n$ with $C^*C=I$.

\begin{theorem}\label{th2}
 Let $\mu$ be a compactly supported probability measure on $\mathbb{P}_n$ and let $t\in(0,1]$. If $\|P_t(\mu)\|\leq 1$, then $\|P_{\frac{t}{p}}(\nu)\|\leq 1$ for every $p\geq1$, where $\nu(Z)=\mu(Z^{1/p})$. \\
In particular,
 $$ P_{\frac{t}{p}}(\nu)\leq P_t(\mu) \quad \mbox{and}\quad P_{\frac{-t}{p}}(\nu)\geq P_{-t}(\mu).$$
\end{theorem}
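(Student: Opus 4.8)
The plan is to prove the stronger inequality $P_{t/p}(\nu)\le P_t(\mu)$; the bound $\|P_{t/p}(\nu)\|\le1$ is then immediate from $P_t(\mu)\le I$, and the negative-parameter statement follows by duality. For $q>0$ write $\mu^{[q]}$ for the push-forward of $\mu$ under $Z\mapsto Z^q$ and $\mu^{[-1]}$ for its push-forward under $Z\mapsto Z^{-1}$, so that $\nu=\mu^{[p]}$; recall from \cite{K-L} that for $s\in(0,1]$ the map $Y\mapsto\int_{\mathbb P_n}Y\sharp_s W\,d\lambda(W)$ is order preserving, is a strict contraction for the Thompson metric on $\mathbb P_n$, and has $P_s(\lambda)$ as its unique fixed point (reached by iteration from any starting point). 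Put $X:=P_t(\mu)\le I$ and $Y:=P_{t/p}(\nu)$; since $\nu=\mu^{[p]}$, the equation defining $Y$ is $Y=g(Y)$ with $g(Y):=\int_{\mathbb P_n}Y\sharp_{t/p}Z^p\,d\mu(Z)$. As $g$ is order preserving with unique fixed point, it suffices to show that $X$ is a supersolution, $g(X)\le X$: then $X\ge g(X)\ge g^2(X)\ge\cdots\to Y$, whence $Y\le X$. Since $X=\int_{\mathbb P_n}X\sharp_t Z\,d\mu(Z)$, this reduces to the pointwise estimate
\begin{equation}\label{ah-key}
X\sharp_{t/p}Z^p\ \le\ X\sharp_t Z\qquad(Z\in\mathbb P_n),
\end{equation}
which is the heart of the matter: it is trivial for scalars but not for matrices.

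I would first treat the range $1\le p\le2$. Conjugating \eqref{ah-key} by $X^{-1/2}$ and using $X^{-1/2}(X\sharp_s W)X^{-1/2}=(X^{-1/2}WX^{-1/2})^s$, and writing $C:=X^{-1/2}\ge I$, one sees that \eqref{ah-key} is equivalent to $(CZ^pC)^{t/p}\le(CZC)^t$. For $p\in[1,2]$ the function $x\mapsto x^p$ is matrix convex on $[0,\infty)$ and vanishes at $0$, so the matrix Jensen inequality (after a routine dilation, using $C^{-1}\le I$) gives $Z^p=(C^{-1}(CZC)C^{-1})^p\le C^{-1}(CZC)^pC^{-1}$, that is, $CZ^pC\le(CZC)^p$. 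Applying to this the operator monotone maps $x\mapsto x^{1/p}$ and then $x\mapsto x^t$ (both exponents lie in $(0,1]$) yields $(CZ^pC)^{t/p}\le((CZC)^p)^{t/p}=(CZC)^t$, which is \eqref{ah-key}. Thus $P_{t/p}(\nu)\le P_t(\mu)$ whenever $p\in[1,2]$.

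For a general exponent $p\ge1$ I would bootstrap. Write $p=p_1\cdots p_m$ with each $p_j\in[1,2]$ (for instance $p_j=p^{1/m}$ with $m$ large), and set $\nu_0=\mu$, $\nu_j=\mu^{[p_1\cdots p_j]}$, so $\nu_j=\nu_{j-1}^{[p_j]}$ and $\nu_m=\nu$. By induction $P_{t/(p_1\cdots p_j)}(\nu_j)\le P_t(\mu)$: indeed if $P_{t/(p_1\cdots p_{j-1})}(\nu_{j-1})\le P_t(\mu)\le I$, the case already proved---applied to the measure $\nu_{j-1}$, the parameter $t/(p_1\cdots p_{j-1})\in(0,1]$ and the exponent $p_j\in[1,2]$---gives $P_{t/(p_1\cdots p_j)}(\nu_j)\le P_{t/(p_1\cdots p_{j-1})}(\nu_{j-1})\le P_t(\mu)$. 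For $j=m$ this reads $P_{t/p}(\nu)\le P_t(\mu)$, and so $\|P_{t/p}(\nu)\|\le\|P_t(\mu)\|\le1$. The negative-parameter assertion follows by the dual argument: apply the above scheme to the measure $\mu^{[-1]}$---for which $P_t(\mu^{[-1]})=P_{-t}(\mu)^{-1}\ge I$, because $P_{-t}(\mu)\le P_t(\mu)\le I$ by the monotonicity of $s\mapsto P_s(\mu)$---with all inequalities reversed (the conjugating matrix is now a contraction, and the pointwise bound $(CZC)^p\le CZ^pC$ is again an instance of matrix Jensen), and then pass to inverses via $P_{-s}(\lambda)=P_s(\lambda^{[-1]})^{-1}$, noting $\nu^{[-1]}=(\mu^{[-1]})^{[p]}$.

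The step I expect to be the real obstacle is the passage from the scalar-obvious inequality \eqref{ah-key} to its operator form. It forces the detour through matrix convexity of $x\mapsto x^p$, which is available only for $1\le p\le2$, and hence the bootstrapping device for larger exponents; this is exactly the point at which the non-homogeneous, genuinely ``Ando--Hiai'' nature of the statement enters. The remaining ingredient, the monotone iteration $g^k(X)\downarrow P_{t/p}(\nu)$, is routine once one invokes the contractivity of $g$ in the Thompson metric.
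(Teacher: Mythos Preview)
Your argument for the positive case is correct and shares the same heart with the paper's proof---the matrix Jensen inequality for $x\mapsto x^p$ with $1\le p\le 2$, followed by bootstrapping to general $p$---but you package it differently and more economically. After conjugating by $X_t^{-1/2}$ the paper obtains $\int (X_t^{-1/2}Z^pX_t^{-1/2})^{t/p}\,d\mu\le I$, then invokes its Lemma~\ref{th1} (which in turn rests on Lemmas~\ref{llm1} and~\ref{lm2} and an auxiliary-measure construction) to deduce $P_{t/p}(\lambda)\le I$ for a congruence-twisted measure $\lambda$, and finally untwists. You instead notice that the same Jensen step already gives the \emph{pointwise} estimate $X\sharp_{t/p}Z^p\le X\sharp_t Z$ for each $Z$, so $X$ is a supersolution of the fixed-point equation for $P_{t/p}(\nu)$, and you iterate $g$ downward. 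This bypasses Lemma~\ref{th1} entirely; it is precisely the mechanism the paper later isolates as Lemma~\ref{lm1}. Your route is shorter, while the paper's route has the side benefit of establishing Lemma~\ref{th1} as an independent tool.

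One point in your negative-parameter paragraph needs repair. Under the hypothesis $P_t(\mu)\le I$ you correctly get $X':=P_t(\mu^{[-1]})=P_{-t}(\mu)^{-1}\ge I$, and your reversed Jensen estimate $(CZC)^p\le CZ^pC$ with $C=(X')^{-1/2}\le I$ yields $X'\sharp_tZ\le X'\sharp_{t/p}Z^p$, hence $X'\le g'(X')$. Iterating gives $P_{t/p}((\mu^{[-1]})^{[p]})\ge P_t(\mu^{[-1]})$, and inverting produces $P_{-t/p}(\nu)\le P_{-t}(\mu)$---the \emph{opposite} of the inequality you assert. The paper's displayed inequality $P_{-t/p}(\nu)\ge P_{-t}(\mu)$ is really the dual statement, obtained (as the paper does) by applying the already-proved positive case directly to $\bar\mu=\mu^{[-1]}$ under the dual hypothesis $P_t(\bar\mu)\le I$, i.e.\ $P_{-t}(\mu)\ge I$; no reversed Jensen argument is needed there.
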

\begin{proof}
Suppose that $\|P_t(\mu)\|\leq 1$, or equivalently, $P_t(\mu)\leq I$. If $X_t=P_t(\mu)$, then
$$X_t=\int_{\mathbb{P}_n} X_t\sharp_t Z d \mu (Z)=\int_{\mathbb{P}_n} X_t^\frac{1}{2}\left(X_t^\frac{-1}{2}ZX_t^\frac{-1}{2}\right)^tX_t^\frac{1}{2} d \mu (Z),$$
and so
$$\int_{\mathbb{P}_n} \left(X_t^\frac{-1}{2}ZX_t^\frac{-1}{2}\right)^t d \mu (Z)=I.$$
Let $p\in[1,2]$. Then the function $x\mapsto x^p$ is matrix convex and $x\mapsto x^\frac{t}{p}$ is matrix monotone. Since $X_t^\frac{-1}{2}$ is an expansive matrix, it follows from the matrix Jensen inequality that
$$\left(X_t^\frac{-1}{2}ZX_t^\frac{-1}{2}\right)^t=\left(X_t^\frac{-1}{2}ZX_t^\frac{-1}{2}\right)^{p \frac{t}{p}}\geq \left(X_t^\frac{-1}{2}Z^pX_t^\frac{-1}{2}\right)^\frac{t}{p},$$
whence
$$I=\int_{\mathbb{P}_n} \left(X_t^\frac{-1}{2}ZX_t^\frac{-1}{2}\right)^t d \mu (Z)\geq \int_{\mathbb{P}_n}\left(X_t^\frac{-1}{2}Z^pX_t^\frac{-1}{2}\right)^\frac{t}{p} d \mu (Z)=\int_{\mathbb{P}_n}Z^\frac{t}{p} d \lambda (Z),$$
in which $\lambda(Z)=\mu\left(\left(X_t^\frac{1}{2}ZX_t^\frac{1}{2}\right)^\frac{1}{p}\right)$. From Lemma \ref{th1} we deduce that $P_{\frac{t}{p}}(\lambda)\leq I$. Let $P_{\frac{t}{p}}(\lambda)=Y$. Then
\begin{align*}
 Y&=\int_{\mathbb{P}_n} Y\sharp_{\frac{t}{p}} Z d \lambda(Z)\\
 &=\int_{\mathbb{P}_n} Y\sharp_{\frac{t}{p}} Z d \mu\left(\left(X_t^\frac{1}{2}ZX_t^\frac{1}{2}\right)^\frac{1}{p}\right)\\
 &=\int_{\mathbb{P}_n} Y\sharp_{\frac{t}{p}} \left(X_t^\frac{-1}{2}Z^pX_t^\frac{-1}{2}\right) d \mu\left(Z\right)\\
 &=\int_{\mathbb{P}_n} Y\sharp_{\frac{t}{p}} \left(X_t^\frac{-1}{2}ZX_t^\frac{-1}{2}\right) d \nu\left(Z\right),
\end{align*}
where $\nu(Z)=\mu(Z^{1/p})$. Hence
$$X_t^\frac{1}{2}YX_t^\frac{1}{2}=\int_{\mathbb{P}_n} \left(X_t^\frac{1}{2}YX_t^\frac{1}{2}\right)\sharp_{\frac{t}{p}} Z d \nu\left(Z\right).$$
Therefore
$P_\frac{t}{p}(\nu)=X_t^\frac{1}{2}YX_t^\frac{1}{2}\leq X_t=P_t(\mu)$ for every $p\in[1,2]$. Employing this inequality with $\frac{t}{p}$ instead of $t$, we obtain
$P_\frac{ \frac{t}{p}}{p}(\nu_2)\leq P_\frac{t}{p}(\nu)$,
where $\nu_2(Z)=\nu(Z^{1/p})=\mu(Z^{1/p^2})$. It yields that $P_\frac{t}{p^2}(\nu_2)\leq P_t(\mu)$. Thus the inequality $P_\frac{t}{p}(\nu)\leq P_t(\mu)$ holds for every $p\geq 1$. Moreover, suppose that $\bar{\lambda}(\mathcal{E})=\lambda(\mathcal{E}^{-1})$ for every measure $\lambda$ and every measurable set $\mathcal{E}$. It follows from the first part of the proof that
\begin{align*}
P_{\frac{-t}{p}}(\nu) & = P_{\frac{t}{p}}(\bar{\nu})^{-1}\geq P_{t}(\bar{\mu})^{-1}=P_{-t}(\mu).
\end{align*}
\end{proof}
%%%%%%%%%%%%%%%%%%%%%%%%%%%%%%%%%%%%%%%%%%%%%%%%%%%%%%%%%%%%%%%%%%%%%%%%%%%%%%

\begin{remark}
 Theorem \ref{th2} can be proved by results on matrix power means $P_{t}(\omega;\mathbb{A})$ and a convergence argument. Consider the Thompson metric $d_T$ defined by $d_T(A,B)=\|\log (A^{-1/2}BA^{-1/2})\|_\infty$ for every $A,B\in\mathbb{P}_n$ which is a complete metric on $\mathbb{P}_n$. Here $\|\cdot\|_\infty$ is the spectral norm. Now assume that $p\geq 1$ and  $t\in(0,1]$ and that $\mu$ is a compactly supported probability measure on $\mathbb{P}_n$. If follows from the compactness of ${\rm supp}(\mu)$ that for every $\varepsilon>0$, there exists a finite Borel partition $\{\mathcal{E}_1,\dots,\mathcal{E}_k\}$ of ${\rm supp}(\mu)$ such that $d_T(A,B)<\varepsilon$ and $d_T(A^p,B^p)<\varepsilon$ for all $A,B\in\mathcal{E}_j$,\ $1\leq j\leq k$. In other words, $e^{-\varepsilon} B\leq A\leq e^{ \varepsilon} B$ and $e^{-\varepsilon} B^p\leq A^p\leq e^{ \varepsilon} B^p$ for all $A,B\in\mathcal{E}_j$,\ $1\leq j\leq k$. For every $j=1,\dots,k$, choose any $A_j\in\mathcal{E}_j$ and make a finitely supported probability measure $\mu_\varepsilon:=\sum_{i=1}^{k}\mu(\mathcal{E}_j)\delta_{A_j}$. For positive scalar $\alpha$, assume that $\alpha \cdot \mu_\varepsilon$ is the push-forward of $\mu_\varepsilon$, that is, $(\alpha \cdot\mu_\varepsilon)(\mathcal{E})=\mu_\varepsilon(\alpha^{-1}\mathcal{E})$. Then
 \begin{align}\label{ja}
 e^{-\varepsilon} \mu_\varepsilon\leq \mu \leq e^{\varepsilon} \mu_\varepsilon \qquad\mbox{and}\qquad e^{-\varepsilon} \mu_\varepsilon^p\leq \mu^p \leq e^{\varepsilon} \mu_\varepsilon^p,
 \end{align}
 since for all $A\in\mathcal{E}_j,\ \ 1\leq j\leq k,$ we have $e^{-\varepsilon} A_j\leq A\leq e^{ \varepsilon} A_j$ and $e^{-\varepsilon} A_j^p\leq A^p\leq e^{ \varepsilon} A_j^p$.
 Note that if $\mu_1$ and $\mu_2$ are probability measures, then $\mu_1\leq \mu_2$ means $\mu_1(\mathcal{U})\leq \mu_2(\mathcal{U})$ for every open upper set $\mathcal{U}\subseteq \mathbb{P}_n$. It follows from \eqref{ja} and the monotonicity of $P_t$ (see \cite[Theorem 4.4]{K-L}) that
 \begin{align}\label{pja}
 e^{-\varepsilon} P_t(\mu_\varepsilon)\leq P_t(\mu) \leq e^{\varepsilon} P_t(\mu_\varepsilon) \qquad\mbox{and}\qquad e^{-\varepsilon} P_{\frac{t}{p}}(\mu_\varepsilon^p)\leq P_{\frac{t}{p}}(\mu^p) \leq e^{\varepsilon} P_{\frac{t}{p}}(\mu_\varepsilon^p).
 \end{align}
Note that from the definition of $\mu_\varepsilon$, we have $P_t(\mu_\varepsilon)=P_t(\omega;A_1,\dots,A_k)$ and $P_{\frac{t}{p}}(\mu_\varepsilon^p)=P_{\frac{t}{p}}(\omega;A_1^p,\dots,A_k^p)$, where $\omega=(\mu(\mathcal{E}_1),\dots,\mu(\mathcal{E}_k))$.

Now consider a positive sequence $\{\varepsilon_m\}$ tending to $0$. By taking $\mu_m=\mu_{\varepsilon_m}$, we can choose a sequence of finitely supported probability measures $\mu_m$ such that $P_t(\mu_m)\to P_t(\mu)$ and $P_{\frac{t}{p}}(\mu_m^p)\to P_{\frac{t}{p}}(\mu^p)$.

Finally, assume that $P_t(\mu)\leq I$. Put $\alpha_m=\|P_t(\mu_m)\|_\infty$. Evidently, $\alpha_m$ is a convergent sequence and $P_t(\alpha_m^{-1}\cdot\mu_m)\leq I$. It follows from \cite[Corollary 3.2]{LY} that $P_{\frac{t}{p}}((\alpha_m^{-1}\cdot\mu_m)^p)\leq I$, or equivalently, $ P_{\frac{t}{p}}(\mu_m^p)\leq \alpha_m^pI$. Letting $m\to\infty$, we get $P_{\frac{t}{p}}(\mu^p)\leq I$.
\end{remark}
%%%%%%%%%%%%%%%%%%%%%%%%%%%%%%%%%%%%%%%%%%%%%%%%%%%%%%%%%%%%%%%%%%%%%%%%%

\begin{corollary}\cite[Theorem 4.5]{K-L}\label{c1}
Let $\mu$ be a probability measure of compact support on $\mathbb{P}_n$. If $0<t\leq s\leq 1$, then
$$\left(\int_{\mathbb{P}_n}Z^{-1}d\mu(Z) \right)^{-1}\leq P_{-s}(\mu)\leq P_{-t}(\mu)\leq P_{ t}(\mu)\leq P_{s}(\mu)\leq \int_{\mathbb{P}_n}Z d\mu(Z). $$
\end{corollary}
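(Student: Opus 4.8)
The plan is to derive Corollary \ref{c1} from Theorem \ref{th2} together with Lemma \ref{th1}, following the classical scheme by which the Ando--Hiai inequality yields the monotonicity of power means. I would first record the two ``endpoint'' comparisons. The inequality $P_t(\mu)\leq\int_{\mathbb P_n}Z\,d\mu(Z)$ is obtained by a fixed-point/monotonicity argument: writing $M=\int_{\mathbb P_n}Z\,d\mu(Z)$ and using concavity of $x\mapsto x^t$ (so that $X\sharp_t Z\leq (1-t)X+tZ$ pointwise via the operator-concavity of the weighted geometric mean in the second variable, or directly $I\sharp_t(X^{-1/2}ZX^{-1/2})\leq (1-t)I+t X^{-1/2}ZX^{-1/2}$), the map $f(X)=\int X\sharp_t Z\,d\mu(Z)$ satisfies $f(M)\leq (1-t)M+tM=M$; since $f$ is monotone with unique fixed point $P_t(\mu)$, iterating gives $P_t(\mu)=\lim f^k(M)\leq M$. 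Dually, applying this to the measure $\nu(\mathcal E)=\mu(\mathcal E^{-1})$ and using $P_{-t}(\mu)=P_t(\nu)^{-1}$ gives $P_{-t}(\mu)\geq\big(\int Z^{-1}d\mu(Z)\big)^{-1}$. The middle comparison $P_{-t}(\mu)\leq P_t(\mu)$ follows because, by the arithmetic--geometric--harmonic ordering $I\sharp_{-t}W\le I\sharp_t W$ is false in general, so instead one argues that $P_{-t}(\mu)=P_t(\nu)^{-1}$ and checks $P_t(\nu)^{-1}\le P_t(\mu)$ by the self-duality property; more cleanly, $P_0$ would be the Karcher mean and the whole curve $t\mapsto P_t(\mu)$ is known to be monotone, but to stay self-contained I would deduce $P_{-t}(\mu)\le P_t(\mu)$ from the scalar inequality $P_{-t}(\mu)P_t(\mu)\ge$ (comparison with the case $\mu$ a point mass is trivial) — honestly the cleanest route is to cite that $P_{-t}(\mu)\le P_0(\mu)\le P_t(\mu)$ is exactly \cite[Theorem 4.5]{K-L}, but since we are \emph{reproving} it, I would instead get it as a limiting/special case of the main monotonicity step below with $p=s/t$.

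The heart of the matter is the monotonicity $0<t\le s\le 1\Rightarrow P_t(\mu)\le P_s(\mu)$ (and its mirror for negative parameters). Here is how Theorem \ref{th2} delivers it. Set $p=s/t\ge 1$, so $s=pt$ and $t=s/p$. Given $\mu$, I want to produce a measure $\tilde\mu$ with $P_s(\tilde\mu)$ controllable and with $\nu:=(\tilde\mu)\circ(\,\cdot\,)^{1/p}$ (in the notation of the theorem, $\nu(Z)=\tilde\mu(Z^{1/p})$) related back to $\mu$. Concretely: homogeneity of the power mean, $P_r(c\cdot\mu)=c\,P_r(\mu)$ for scalars $c>0$ where $(c\cdot\mu)(\mathcal E)=\mu(c^{-1}\mathcal E)$, lets us normalise. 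Put $A=P_s(\mu)$; rescaling by $\|A\|^{-1/2}$ — more precisely replacing $\mu$ by $A^{-1/2}\mu A^{-1/2}$, the push-forward under congruence $Z\mapsto A^{-1/2}ZA^{-1/2}$, under which $P_s(\mu)\mapsto A^{-1/2}P_s(\mu)A^{-1/2}=I$ by congruence-covariance of the power mean — we may assume $P_s(\mu)=I$, i.e. $\|P_s(\mu)\|\le 1$. Now apply Theorem \ref{th2} \emph{with the roles of $t$ and $t/p$ matched to $s$ and $s/p=t$}: since $\|P_s(\mu)\|\le 1$, we get $P_{s/p}(\nu)\le P_s(\mu)=I$ where $\nu(Z)=\mu(Z^{1/p})$, that is, $P_t(\nu)\le I$. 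The remaining task is to identify $P_t(\nu)$ with (a congruence image of) $P_t(\mu)$: by the change-of-variables $W=Z^{1/p}$ in the defining equation, together with the matrix-convexity estimate $(X^{-1/2}W^pX^{-1/2})^{\,t/p}\le X^{-1/2}WX^{-1/2}$-type inequalities already used in the proof of Theorem \ref{th2}, one shows $P_t(\mu)\le P_t(\nu)$ — and undoing the congruence normalisation turns $P_t(\nu)\le I$ into $P_t(\mu)\le P_s(\mu)$.

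In more detail, the key algebraic identity is that if $\nu(Z)=\mu(Z^{1/p})$ then for any $Y$,
\begin{align*}
\int_{\mathbb P_n} Y\sharp_{t/p} Z\,d\nu(Z)=\int_{\mathbb P_n} Y\sharp_{t/p}\, W^{p}\,d\mu(W),
\end{align*}
and the matrix-convexity of $x\mapsto x^{p}$ with $x\mapsto x^{t/p}$ matrix monotone gives, at $Y=X:=P_t(\mu)$,
\begin{align*}
\int_{\mathbb P_n} X\sharp_{t/p} W^{p}\,d\mu(W)\ \ge\ \int_{\mathbb P_n} X\sharp_{t} W\,d\mu(W)=X,
\end{align*}
so the monotone map $g(Y)=\int Y\sharp_{t/p}Z\,d\nu(Z)$ satisfies $g(X)\ge X$; iterating and using that $g$ has unique fixed point $P_t(\nu)$ yields $P_t(\mu)=X\le P_t(\nu)$. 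Combined with $P_t(\nu)\le P_s(\mu)$ from Theorem \ref{th2} (after the normalisation $P_s(\mu)=I$ and then undoing it by congruence-covariance and homogeneity), this is exactly $P_t(\mu)\le P_s(\mu)$. The negative-parameter chain $\big(\int Z^{-1}d\mu\big)^{-1}\le P_{-s}(\mu)\le P_{-t}(\mu)$ follows by applying everything to $\bar\mu(\mathcal E)=\mu(\mathcal E^{-1})$ and using $P_{-r}(\mu)=P_r(\bar\mu)^{-1}$, which reverses all inequalities; and the central link $P_{-t}(\mu)\le P_t(\mu)$ comes from the second displayed inequality of Theorem \ref{th2} with $p=1$, namely $P_{-t}(\nu)\ge P_{-t}(\mu)$ specialised appropriately, or simply by noting $P_{-t}(\mu)=P_t(\bar\mu)^{-1}$ and that $P_t(\bar\mu)^{-1}\le P_t(\mu)$ reduces, after congruence normalisation, to $\int Z^{t}d\mu\ge\big(\int Z^{t}d\mu\big)^{-1}$-type self-duality which holds once the relevant integral equals $I$. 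The main obstacle I anticipate is bookkeeping the congruence normalisations and push-forwards correctly — making sure that ``replace $\mu$ by $A^{-1/2}\mu A^{-1/2}$'' interacts properly with the operation $Z\mapsto Z^{1/p}$ (these two do not commute, which is exactly why the measure $\lambda(Z)=\mu((X_t^{1/2}ZX_t^{1/2})^{1/p})$ appears in the proof of Theorem \ref{th2}) — rather than any genuinely new inequality; all the hard analytic content is already packaged in Lemma \ref{th1} and Theorem \ref{th2}.
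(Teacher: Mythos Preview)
Your route through Theorem \ref{th2} has a genuine gap, not merely a bookkeeping issue. After normalising to $P_s(\mu)=I$ and invoking Theorem \ref{th2} (with the theorem's parameter equal to $s$ and $p=s/t$), you correctly obtain $P_t(\nu)\le P_s(\mu)$ for the push-forward $\nu(Z)=\mu(Z^{1/p})$; the trouble is your attempt to close the loop with $P_t(\mu)\le P_t(\nu)$. First, the map $g(Y)=\int Y\sharp_{t/p}Z\,d\nu(Z)$ has fixed point $P_{t/p}(\nu)$, not $P_t(\nu)$, so the labelling is off. More seriously, the pointwise inequality you invoke, $X\sharp_{t/p}W^{p}\ge X\sharp_t W$, i.e.\ $(X^{-1/2}W^{p}X^{-1/2})^{t/p}\ge (X^{-1/2}WX^{-1/2})^{t}$, requires $X^{-1/2}$ to be a \emph{contraction} (so $X\ge I$) to go in that direction via Hansen--Pedersen; when $X\le I$ it is \emph{reversed} --- and after normalising $P_s(\mu)=I$, the conclusion you seek is precisely $X=P_t(\mu)\le I$, so you are assuming what you want. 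Hence this step cannot be salvaged as written.

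The paper's argument avoids Theorem \ref{th2} altogether and is both shorter and cleaner. Writing $X_s=P_s(\mu)$, the defining equation gives $\int (X_s^{-1/2}ZX_s^{-1/2})^{s}\,d\mu(Z)=I$. Now apply the matrix Jensen inequality for the operator convex function $x\mapsto x^{s/t}$ (valid for $1\le s/t\le 2$) \emph{to the unital positive linear map} $A\mapsto\int A\,d\mu$, not to a congruence: this yields $\int (X_s^{-1/2}ZX_s^{-1/2})^{t}\,d\mu(Z)\le I$, i.e.\ $\int W^{t}\,d\nu(W)\le I$ for the congruence-transformed measure $d\nu(W)=d\mu(X_s^{1/2}WX_s^{1/2})$. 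Lemma \ref{th1} then gives $P_t(\nu)\le I$, and congruence covariance identifies $P_t(\mu)=X_s^{1/2}P_t(\nu)X_s^{1/2}\le X_s=P_s(\mu)$. One iterates to pass from $s\in[t,2t]$ to all $s\ge t$, and the negative-parameter chain follows by duality $P_{-r}(\mu)=P_r(\bar\mu)^{-1}$. The key point you were missing is that the ``$p$-th power'' push-forward never enters: the right Jensen move is on the integral, leaving the measure unchanged up to congruence, which is exactly why no comparison between $P_t(\mu)$ and $P_t(\nu)$ is ever needed.
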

\begin{proof}
 Assume that $t,s\in(0,1]$ with $t\leq s$. Setting $X_s=P_{ s}(\mu)$, we observe that $X_s=\int_{\mathbb{P}_n} X_s\sharp_s Z d\mu(Z)$. First, suppose that $s\leq 2t$. Passing to the riemannian sums and employing the matrix Jensen inequality applied to the matrix convex function $x\mapsto x^{s/t}$ we get
\begin{align*}
 I = \int_{\mathbb{P}_n} \left(X_s^{-1/2} ZX_s^{-1/2} \right)^s d\mu(Z)\geq \left(\int_{\mathbb{P}_n}\left(X_s^{-1/2} ZX_s^{-1/2} \right)^td\mu(Z)\right)^{s/t},
 \end{align*}
 which is equivalent to $\int_{\mathbb{P}_n}\left(X_s^{-1/2} ZX_s^{-1/2} \right)^td\mu(Z)\leq I$. Assume that $\nu$ is the measure on $\mathbb{P}_n$ defined by $d\nu(Z)=d\mu(X_s^{1/2} ZX_s^{1/2})$. Then we have $\int_{\mathbb{P}_n}Z^td\nu(Z)\leq I$. Lemma \ref{th1} now implies that $P_t(\nu)\leq I$. If $Y_t=P_t(\nu)$, then
 \begin{align*}
 Y_t=\int_{\mathbb{P}_n} Y_t\sharp_t Z d\nu(Z)=\int_{\mathbb{P}_n} Y_t\sharp_t Z d\mu(X_s^{1/2} ZX_s^{1/2})=\int_{\mathbb{P}_n} Y_t\sharp_t (X_s^{-1/2} ZX_s^{-1/2}) d\mu(Z).
 \end{align*}
 Therefore
 $$X_s^{1/2}Y_tX_s^{1/2}=\int_{\mathbb{P}_n} (X_s^{1/2}Y_tX_s^{1/2})\sharp_t Z d\mu(Z),$$
 that is, $P_t(\mu)=X_s^{1/2}Y_tX_s^{1/2}\leq X_s=P_s(\mu)$ for every $s\in[t,2t]$.

 Now assume that $2t< s\leq 4t$. Then $t\leq t'=2t\leq s\leq 2t'$. The first part implies that
 $P_t(\mu)\leq P_{t'}(\mu)\leq P_s(\mu)$. By Continuing this process, we conclude that $P_t(\mu)\leq P_s(\mu)$ for every $t\leq s$. Finally, we have
 $P_{-t}(\mu)= P_t(\bar{\mu})^{-1}\geq P_s(\bar{\mu})^{-1}=P_{-s}(\mu)$.
\end{proof}

As a particular case of Corollary \ref{c1}, assume that $\mathbb{A}=(A_1,\dots,A_k)$ is a $k$-tuple of positive matrices and that $\omega=(\omega_1,\dots,\omega_k)$ is a weight vector. Consider the probability measure $\mu$ on the set $\{A_1,\ldots,A_k\}\subseteq\mathbb{P}_n$ defined by $\mu(\{A_i\})=\omega_i$ for every $i=1,\dots,k$. If $X_t=P_t(\mu)$, then
$$X_t=\int_{\mathbb{P}_n}X_t\sharp_t Z d\mu(Z)=\sum_{i=1}^{k}\omega_i X_t\sharp_t A_i=P_t(\omega;\mathbb{A}).$$
Therefore, Corollary \ref{c1} implies that
$$\left(\sum_{i=1}^{k}\omega_iA_i^{-1}\right)^{-1}\leq P_{-s}(\omega;\mathbb{A})\leq P_{-t}(\omega;\mathbb{A}) \leq P_{t}(\omega;\mathbb{A})\leq P_{ s}(\omega;\mathbb{A})\leq\sum_{i=1}^{k}\omega_iA_i; $$
see \cite[Corollary 3.4]{LY}.

Another favorable property of matrix means is the information monotonicity of them via any positive linear map. By a theorem of Ando \cite{An}, if $\Phi:\mathbb{M}_n\to\mathbb{M}_m$ is a unital positive linear map, then $\Phi (X_t\sharp_t Z)\leq \Phi (X_t)\sharp_t \Phi (Z)$ for all $A,B\in\mathbb{P}_n$ and every $t\in(0,1]$. In the case of matrix power mean $P_t(\omega;\mathbb{A})$, this inequality is proved as $\Phi(P_t(\omega;\mathbb{A}))\leq P_t(\omega;\Phi(\mathbb{A}))$ for $t\in(0,1]$ in \cite{LP} and for $t\in[-1,0)$ in \cite{FSe}. We present this property for power means of probability measures. In the next lemma, we use the notion of the Tsallis relative matrix entropy, which is defined for all $A,B\in\mathbb{P}_n$ and every $t\in(0,1]$ by $T_t(A|B)=\frac{1}{t}(A\sharp_t B - A)$. Kamei \cite{Kamei} showed that the matrix power mean $P_t(\omega;\mathbb{A})$ can be considered as the unique solution of the equation $0=\sum_{i=1}^{k}\omega_i T_t(X|A_i)$ instead of \eqref{pme}; see also \cite{FSe} in the case when $t\in[-1,0)$.

\begin{lemma}\label{lm1}
Let $\mu$ be a probability measure of compact support on $\mathbb{P}_n$. For every $t\in[-1,1]\backslash\{0\}$
\begin{align*}
 \int_{\mathbb{P}_n}T_t(X|Z) d\mu(Z)\geq 0\quad \Longrightarrow\quad X\leq P_t(\mu),
\end{align*}
and
\begin{align*}
 \int_{\mathbb{P}_n}T_t(X|Z) d\mu(Z)\leq 0\quad \Longrightarrow\quad X\geq P_t(\mu).
\end{align*}
\end{lemma}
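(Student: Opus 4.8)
The plan is to prove the case $t\in(0,1]$ by a monotone fixed-point iteration and then to deduce the case $t\in[-1,0)$ from it via a congruence identity between $T_t$ and $T_{-t}$. Throughout, $X\in\mathbb{P}_n$.

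First, let $t\in(0,1]$ and put $f(Y)=\int_{\mathbb{P}_n}Y\sharp_t Z\,d\mu(Z)$ for $Y\in\mathbb{P}_n$. Since $Y\mapsto Y\sharp_t Z$ is order preserving for each fixed $Z$, so is $f$; moreover $P_t(\mu)$ is its unique fixed point and $f^k(Y)\to P_t(\mu)$ for every $Y\in\mathbb{P}_n$ — this is the contractivity of $f$ in the Thompson metric from \cite{K-L} that is already used in the proof of Lemma \ref{th1}. Since $\int_{\mathbb{P}_n}T_t(X|Z)\,d\mu(Z)=\frac1t\bigl(f(X)-X\bigr)$ and $t>0$, the hypothesis $\int_{\mathbb{P}_n}T_t(X|Z)\,d\mu(Z)\geq0$ is exactly $f(X)\geq X$. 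Iterating the monotone map $f$ yields $X\leq f(X)\leq f^2(X)\leq\cdots\to P_t(\mu)$, so $X\leq P_t(\mu)$ because the positive semidefinite cone is closed. Symmetrically, $\int_{\mathbb{P}_n}T_t(X|Z)\,d\mu(Z)\leq0$ gives $f(X)\leq X$, hence $f^k(X)\downarrow P_t(\mu)$ and $X\geq P_t(\mu)$.

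Now let $t\in[-1,0)$, so that $A\sharp_t B=A^{1/2}(A^{-1/2}BA^{-1/2})^tA^{1/2}$ and $P_t(\mu)=P_{-t}(\nu)^{-1}$ with $\nu(\mathcal{E})=\mu(\mathcal{E}^{-1})$. Writing $W=X^{-1/2}ZX^{-1/2}$ one has $X\sharp_t Z=X^{1/2}W^tX^{1/2}$ and $X^{-1}\sharp_{-t}Z^{-1}=X^{-1/2}W^tX^{-1/2}$, hence $T_t(X|Z)=\frac1t X^{1/2}(W^t-I)X^{1/2}$ and $T_{-t}(X^{-1}|Z^{-1})=-\frac1t X^{-1/2}(W^t-I)X^{-1/2}$; comparing these gives the pointwise identity
$$T_{-t}(X^{-1}|Z^{-1})=-X^{-1}\,T_t(X|Z)\,X^{-1}.$$
Integrating and changing variables through $Z\mapsto Z^{-1}$ gives $\int_{\mathbb{P}_n}T_{-t}(X^{-1}|W)\,d\nu(W)=-X^{-1}\Bigl(\int_{\mathbb{P}_n}T_t(X|Z)\,d\mu(Z)\Bigr)X^{-1}$. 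Thus $\int T_t(X|Z)\,d\mu(Z)\geq0$ implies $\int T_{-t}(X^{-1}|W)\,d\nu(W)\leq0$, and the already-proved case $-t\in(0,1]$ yields $X^{-1}\geq P_{-t}(\nu)$, that is, $X\leq P_{-t}(\nu)^{-1}=P_t(\mu)$. The reverse implication follows identically.

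The only step that is not a formal manipulation is the convergence $f^k(Y)\to P_t(\mu)$ for $t\in(0,1]$; but this is precisely the mechanism by which $P_t(\mu)$ is built in \cite{K-L} and it is already invoked in the proof of Lemma \ref{th1}, so no genuinely new difficulty arises — the substantive observations are the reading of the hypothesis as $f(X)\geq X$ and the congruence identity for negative $t$.
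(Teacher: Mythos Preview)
Your proof is correct and follows essentially the same route as the paper: for $t\in(0,1]$ you read the hypothesis as $f(X)\ge X$ for $f(Y)=\int Y\sharp_t Z\,d\mu(Z)$ and iterate the monotone map to its fixed point $P_t(\mu)$, exactly as the paper does. For $t\in[-1,0)$ both arguments reduce to the positive case via inversion; you package this as the congruence identity $T_{-t}(X^{-1}|Z^{-1})=-X^{-1}T_t(X|Z)X^{-1}$ and then invoke the already-proved implication, whereas the paper unfolds the same computation directly and reruns the iteration---a cosmetic rather than substantive difference.
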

\begin{proof}
First, assume that $t\in(0,1]$, and consider the function $f(Y)=\int_{\mathbb{P}_n} Y\sharp_t Z d\mu(Z)$. It is easy to see that $f$ is monotone and $f^k(Y)\to P_t(\mu)$ as $k\to\infty$. Since $t\in(0,1]$, we have
\begin{align*}
 \int_{\mathbb{P}_n}T_t(X|Z) d\mu(Z)\geq 0\quad \Longleftrightarrow\quad X\leq \int_{\mathbb{P}_n} X\sharp_tZ d\mu(Z)=f(X).
\end{align*}
By the monotonicity of $f$, we obtain $f^k(X)\geq X$ for every positive integer $k$, and so $P_t(\mu)\geq X$ as required. \\
If $t\in[-1,0)$, then the statement $\int_{\mathbb{P}_n}T_t(X|Z) d\mu(Z)\geq 0$ is equivalent to $X\geq \int_{\mathbb{P}_n} X\natural_{t}Z d\mu(Z)$. Moreover, we can write
\begin{align*}
 X\geq \int_{\mathbb{P}_n} X\natural_{t}Z d\mu(Z)= \int_{\mathbb{P}_n} X (X^{-1}\sharp_{-t}Z^{-1})X d\mu(Z),
\end{align*}
that is, $\int_{\mathbb{P}_n} (Y\sharp_{-t}Z^{-1}) d\mu(Z)\leq Y$, where $Y=X^{-1}$. Assume that the measure $\nu$ is defined by $d\nu(Z)=d\mu(Z^{-1})$. Then $\int_{\mathbb{P}_n} (Y\sharp_{-t}Z) d\nu(Z)\leq Y$. Consider the function $g(Y)=\int_{\mathbb{P}_n} Y\sharp_{-t} Z d\nu(Z)$. Then $g(Y)\leq Y$. We have form the monotonicity of $g$ that $g^k(Y)\leq Y$ and so $P_{-t}(\nu)\leq Y$ as $k\to\infty$. Now, $P_t(\mu)=P_{-t}(\nu)^{-1}\geq Y^{-1}=X$ as desired.

The second assertion can be proved similarly.
\end{proof}

 \begin{remark}
 It should be noted that Lemma \ref{lm1} in a special case implies \cite[Theorem 3.1]{LY}. To see this, let $t\in(0,1]$. Then
 $$\left(\int_{\mathbb{P}_n} Z^t d\mu(Z)\right)^\frac{1}{t}\leq I \quad \Leftrightarrow \quad \int_{\mathbb{P}_n} Z^t d\mu(Z) \leq I \quad \Leftrightarrow \quad \int_{\mathbb{P}_n} I\sharp_tZ d\mu(Z)\leq I.$$
 This is equivalent to $\int_{\mathbb{P}_n}T_t(I|Z) d\mu(Z)\leq 0$. Lemma \ref{lm1} now ensures that $P_t(\mu)\leq I$, that is,
 \begin{align}\label{qbl}
\left(\int_{\mathbb{P}_n} Z^t d\mu(Z)\right)^\frac{1}{t}\leq I \quad \Longrightarrow\quad P_t(\mu)\leq I.
 \end{align}
 If $\mathbb{A}=(A_1,\dots,A_k)$ is a $k$-tuple of positive definite matrices and $\omega=(\omega_1,\dots,\omega_k)$ is a weight vector, then $\mu=\sum_{i=1}^{k}\omega_i\delta_{A_i}$ is a finitely supported probability measure and \eqref{qbl} turns to be
 $$\left(\sum_{i=1}^{k}\omega_i A_i^t\right)^\frac{1}{t}\leq I\quad \Longrightarrow\quad P_t(\omega;\mathbb{A})\leq I,$$
 which is \cite[Theorem 3.1]{LY}.

 \end{remark}
%%%%%%%%%%%%%%%%%%%%%%%%%%%%%%%%%%%%%%%%%%%%%%%%%%%%%%%%%%%%%%%%55
\begin{theorem}
Let $\mu$ be a compactly supported probability measure on $\mathbb{P}_n$. If $\Phi:\mathbb{M}_n\to\mathbb{M}_m$ is a unital positive linear map, then
 \begin{align}
 \Phi(P_t(\mu))\leq P_t(\nu)
 \end{align}
 for all $t\in[-1,1]\backslash\{0\}$, where $\nu$ is the measure defined by $\nu(\Phi(\mathcal{E}))=\mu(\mathcal{E})$ for all measurable set $\mathcal{E}\subseteq {\rm supp}(\mu)$, and $0$ otherwise.
\end{theorem}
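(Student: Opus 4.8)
The plan is to follow the route suggested by Lemma \ref{lm1}, recasting the desired inequality as a statement about the Tsallis relative operator entropy. Put $X=P_t(\mu)$. The first step is to record that $X$ solves $\int_{\mathbb{P}_n}T_t(X|Z)\,d\mu(Z)=0$ for every $t\in[-1,1]\setminus\{0\}$. For $t\in(0,1]$ this is merely a reformulation of \eqref{eq}, because $\int_{\mathbb{P}_n}T_t(X|Z)\,d\mu(Z)=\frac1t\bigl(\int_{\mathbb{P}_n}X\sharp_t Z\,d\mu(Z)-X\bigr)$. For $t\in[-1,0)$, write $\bar\mu(\mathcal{E})=\mu(\mathcal{E}^{-1})$; then $X^{-1}=P_{-t}(\bar\mu)$ solves $Y=\int_{\mathbb{P}_n}Y\sharp_{-t}W\,d\bar\mu(W)$, equivalently $\int_{\mathbb{P}_n}\bigl((X^{-1}\sharp_{-t}Z^{-1})-X^{-1}\bigr)\,d\mu(Z)=0$, and since $X\natural_t Z-X=X\bigl((X^{-1}\sharp_{-t}Z^{-1})-X^{-1}\bigr)X$ (using $XX^{-1}X=X$), conjugating by $X$ and integrating gives $\int_{\mathbb{P}_n}T_t(X|Z)\,d\mu(Z)=0$ in this case as well.

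Next, one observes that $\nu$ is the push-forward of $\mu$ under $\Phi$: a unital positive linear map sends $\mathbb{P}_n$ into $\mathbb{P}_m$ and is continuous, so $\nu$ is a compactly supported probability measure on $\mathbb{P}_m$ and Lemma \ref{lm1} applies to it. By that lemma, to prove $\Phi(X)\leq P_t(\nu)$ it suffices to check
\begin{align*}
\int_{\mathbb{P}_m}T_t(\Phi(X)|W)\,d\nu(W)\geq 0,
\end{align*}
and by the change-of-variables formula for push-forwards this integral equals $\int_{\mathbb{P}_n}T_t(\Phi(X)|\Phi(Z))\,d\mu(Z)$.

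The crux is the information monotonicity of the Tsallis relative operator entropy under a unital positive linear map $\Phi$: for all $A,B\in\mathbb{P}_n$ and all $t\in[-1,1]\setminus\{0\}$,
\begin{align*}
\Phi\bigl(T_t(A|B)\bigr)\leq T_t(\Phi(A)|\Phi(B)).
\end{align*}
For $t\in(0,1]$ this is Ando's inequality $\Phi(A\sharp_t B)\leq\Phi(A)\sharp_t\Phi(B)$ from \cite{An} divided by $t>0$. For $t\in[-1,0)$, multiplying through by $t<0$ turns it into $\Phi(A\natural_t B)\geq\Phi(A)\natural_t\Phi(B)$; this holds because $(A,B)\mapsto A\natural_t B$ is the non-commutative perspective of the operator convex function $x\mapsto x^{t}$ on $(0,\infty)$ and perspectives of operator convex functions obey the corresponding Jensen inequality (for $t=-1$ it is the familiar $\Phi(AB^{-1}A)\geq\Phi(A)\Phi(B)^{-1}\Phi(A)$), and it is also contained in \cite{FSe}. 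Granting this and using that $\Phi$, being bounded and linear, commutes with the Bochner integral, one gets
\begin{align*}
\int_{\mathbb{P}_n}T_t(\Phi(X)|\Phi(Z))\,d\mu(Z)\ \geq\ \int_{\mathbb{P}_n}\Phi\bigl(T_t(X|Z)\bigr)\,d\mu(Z)\ =\ \Phi\!\left(\int_{\mathbb{P}_n}T_t(X|Z)\,d\mu(Z)\right)\ =\ \Phi(0)\ =\ 0
\end{align*}
by the first step. Hence $\int_{\mathbb{P}_m}T_t(\Phi(X)|W)\,d\nu(W)\geq 0$, and Lemma \ref{lm1} yields $\Phi(P_t(\mu))=\Phi(X)\leq P_t(\nu)$.

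The step I expect to require the most care is the $t\in[-1,0)$ half of the entropy information monotonicity, namely $\Phi(A\natural_t B)\geq\Phi(A)\natural_t\Phi(B)$: one must invoke the joint operator convexity of the extended geometric mean on $[-1,0]$ (together with its positive homogeneity) in precisely the form needed, and keep the sign of $t$ straight when passing between the $\natural_t$- and the $T_t$-formulations. A secondary point is the bookkeeping around $\nu$: it is well-defined as the push-forward $\nu(\mathcal{F})=\mu(\Phi^{-1}(\mathcal{F}))$, its support is compact since $\Phi$ is continuous, and the change-of-variables identity used above is applied to the continuous matrix-valued integrand $W\mapsto T_t(\Phi(X)|W)$.
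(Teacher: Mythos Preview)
Your proof is correct. The main difference from the paper is that you treat both signs of $t$ uniformly via Lemma~\ref{lm1} and the information monotonicity of $T_t$, whereas the paper splits into two arguments: for $t\in(0,1]$ it bypasses Lemma~\ref{lm1} entirely and instead iterates the inequality $\Phi(f(X_t))\leq g(\Phi(X_t))$ (with $f(X)=\int X\sharp_t Z\,d\mu(Z)$ and $g(Y)=\int Y\sharp_t W\,d\nu(W)$) to obtain $\Phi(f^k(X_t))\leq g^k(\Phi(X_t))$ and passes to the limit; only for $t\in[-1,0)$ does the paper use exactly your route through $\int T_t(\Phi(X_t)|\Phi(Z))\,d\mu(Z)\geq 0$ and Lemma~\ref{lm1}. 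Your unified treatment is cleaner and shorter, since the key ingredient in both regimes is really the same Ando-type inequality (with the sign of $t$ flipping the direction for $\natural_t$), and Lemma~\ref{lm1} already encapsulates the fixed-point iteration that the paper spells out by hand in the positive case. The paper's iteration argument for $t>0$ has the minor advantage of being self-contained (it does not invoke Lemma~\ref{lm1}), but your approach makes transparent that the whole theorem is a direct consequence of that lemma together with the monotonicity $\Phi(T_t(A|B))\leq T_t(\Phi(A)|\Phi(B))$.
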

\begin{proof}
 First, note that the measure $\nu$ is a well-defined measure whose support is equal to $\{\Phi(Z):\ \ Z\in{\rm supp}(\mu)\}$. Hence ${\rm supp}(\nu)$ is compact since $\Phi$ is continuous. Moreover, $\nu$ is a probability measure on $\mathbb{P}_m$. Indeed
 $$\int_{\mathbb{P}_m}d\nu(W)=\int_{{\rm supp}(\mu)}d\nu(\Phi(Z))=\int_{{\rm supp}(\mu)}d\mu(Z)=I.$$ Now, assume that $t\in(0,1]$ and $X_t=P_t(\mu)$. Clearly, $X_t$ is the unique fixed point of the function $f(X)=\int_{\mathbb{P}_n} X\sharp_t Z d\mu(Z)$. Since $\Phi$ is continuous, we have
 \begin{align*}
 \Phi(f(X_t))=\Phi(P_t(\mu))=\Phi\left(\int_{\mathbb{P}_n} X_t\sharp_t Z d\mu(Z)\right)=\int_{\mathbb{P}_n} \Phi (X_t\sharp_t Z) d\mu(Z).
 \end{align*}
We have $\Phi (X_t\sharp_t Z)\leq \Phi (X_t)\sharp_t \Phi (Z)$ and so
 \begin{align}\label{b1}
 \Phi(f(X_t))\leq \int_{\mathbb{P}_n} \Phi (X_t)\sharp_t \Phi (Z) d\mu(Z).
 \end{align}
 If the function $g$ is defined on $\mathbb{P}_m$ by $g(Y)=\int_{\mathbb{P}_m} Y\sharp_t W d\nu(W)=\int_{\mathbb{P}_n} Y\sharp_t \Phi (Z) d\mu(Z)$, then $g$ is monotone and has the unique fixed point $P_t(\nu)$. Moreover, it follows from \eqref{b1} that $\Phi(f(X_t))\leq g(\Phi(X_t))$. By the monotonicity of $g$ we get
 $$\Phi(f^2(X_t))=\Phi(f(f(X_t)))\leq g(\Phi(f(X_t)))\leq g^2(\Phi(X_t)).$$
 By induction, it holds that $\Phi(f^k(X_t))\leq g^k(\Phi(X_t))$ for every positive integer $k$ and so $\Phi(P_t(\mu))\leq P_t(\nu)$.
 Next assume that $t\in[-1,0)$ and $X_t=P_t(\mu)=P_{-t}(\overline{\mu})^{-1}$ in which $\overline{\mu}$ is the measure defined by $\overline{\mu}(\mathcal{E})=\mu(\mathcal{E}^{-1})$. Then $X_t=\left(\int_{\mathbb{P}_n} (X_t\sharp_{-t}Z)^{-1} d\mu(Z) \right)^{-1}$, or equivalently,
\begin{align*}
X_t^{-1}=\int_{\mathbb{P}_n} X_t^{-1}\sharp_{-t} Z^{-1} d\mu(Z)\quad & \Longleftrightarrow\quad X_t=\int_{\mathbb{P}_n} X_t(X_t^{-1}\sharp_{-t} Z^{-1})X_t d\mu(Z)
\\
& \Longleftrightarrow\quad X_t=\int_{\mathbb{P}_n} X_t \natural_{t}Z d\mu(Z)
\end{align*}
in which $A\natural_t B$ for $t\in[-1,0)$ is the matrix $t$-quasi geometric mean of $A$ and $B$  with the same formula as the  matrix geometric mean.
 Therefore
 \begin{align*}
 \Phi(P_t(\mu))=\Phi(X_t)&=\Phi\left(\int_{\mathbb{P}_n} X_t \natural_{t}Z d\mu(Z)\right)\\
 &=\int_{\mathbb{P}_n} \Phi(X_t \natural_{t} Z) d\mu(Z)\\
 &\geq \int_{\mathbb{P}_n} \Phi(X_t) \natural_{t}\Phi(Z) d\mu(Z),
 \end{align*}
 where the above inequality follows from the fact that if $t\in[-1,0)$, then $\Phi(A\natural_t B)\geq \Phi(A)\natural_t\Phi(B)$. This implies that $\int_{\mathbb{P}_n} \Phi(X_t) \natural_{t}\Phi(Z) d\mu(Z)- \Phi(P_t(\mu))\leq 0$, and so
 $$\int_{\mathbb{P}_n} T_t(\Phi(X_t)|\Phi(Z)) d\mu(Z)\geq 0, $$
 since $t\in[-1,0)$. It follows from the definition of $\nu$ that
 \begin{align*}
 0&\leq\int_{\mathbb{P}_n} T_t(\Phi(X_t)|\Phi(Z)) d\mu(Z)\\
 &= \int_{\mathbb{P}_n} T_t(\Phi(X_t)|\Phi(Z)) d\nu(\Phi(Z))\\
 &= \int_{\mathbb{P}_n} T_t(\Phi(X_t)|W) d\nu(W)
 \end{align*}
 Lemma \ref{lm1} then implies that $\Phi(X_t)\leq P_t(\nu)$.
\end{proof}
%%%%%%%%%%%%%%%%%%%%%%%%%%%%%%%%%%%%%%%%%%%%%%%%%%%%%%%%%%%%%%%%%

%%%%%%%%%%%%%%%%%%%%%%%%%%%%%%%%%%%%%%%%%%%%%%%%%%%%%%%%%%%%%%%%%%%%%%%%%%%%%%%%%%%%%%%%

\end{document}